\title{Verbally closed subgroups of free groups}
\author{A. Myasnikov, V. Roman'kov}
\date{}
\newtheorem{theorem}{Theorem}[section]
\newtheorem{lemma}[theorem]{Lemma}
\theoremstyle{definition}
\newtheorem{definition}[theorem]{Definition}
\newtheorem{problem}[theorem]{Problem}
\newtheorem{proposition}[theorem]{Proposition}
\def\CK{\mathcal K}
\newcounter{comcount}
\begin{document}

\maketitle

\begin{abstract}
We prove that every verbally closed subgroup of a free group $F$ of a finite rank is a retract  of $F.$ 
\end{abstract}


\section{Introduction}
\label{se:intro}

Algebraically closed objects play an extremely important part in modern algebra.    In this paper we study verbally closed and algebraically closed subgroups of free groups. 

Recall, that if $\CK$ is a class of structures in a language $L$ then a structure $A \in \mathcal{K}$ is called {\em algebraically closed} in $\CK$ if for any positive existential sentence  $\phi(x_1, \ldots,x_n)$ in the language $L$  with constants from $A$ if $\phi$ holds in some $B \in \CK$ that contains $A$  then it holds in $A$.  We refer  to \cite{Hodges2} for general facts on algebraically closed structures.
An interesting particular case occurs when $A$ is algebraically closed in $\CK = \{A,B\}$  for some $B$ containing $A$ as a substructure $A \leq B$. In this event $A$ is termed  {\em algebraically closed} in $B$.  Another typical and useful variation on algebraically closed structures appears when one restricts the  definition above onto  $\phi$ from  a fixed subset $\Phi$ of positive existential sentences from $L$, in which case one gets $\Phi$-algebraically closed structures. One the other hand, if instead of positive existential sentences  one has in the definitions above arbitrary existential sentences $\phi$ then this defines an {\em existentially closed} structures in $\CK$.

In the case of groups the notions above can be explained in pure algebraic terms. To this end we remind some  terminology.  For groups $H$ and $G$ we write  $H \leq G$  if $H$ is a subgroup of $G$ and refer to this as  an {\em extension} of $H$ to $G$. Let $X = \{x_1, x_2, ..., x_k, ... \}$ be countable infinite  set of variables, and $F(X)$ be the free group with basis $X.$ By an equation with variables $x_1, ..., x_n \in X$ and constants from $H$ we mean an arbitrary expression $E(x_1, ..., x_n, H) = 1$ where $E(x_1, ..., x_n, H)$ is a word in the alphabet $X^{\pm 1} \cup H,$ in other words $E(x_1, ..., x_n, H)$ lies in $F[H] = F(X) \ast H,$ the free product of $F(X)$ and $H.$ In the case when the left side of the equation does not depend from $H$ we will omit $H$ in its expression.   We say that $E(x_1, \ldots,x_n, H) = 1$ has a solution in $G$ if there is a substitution 
$x_i \to g_i$ for some elements $g_i \in G$ such that $E(g_1, \ldots, g_n) = 1$ in $G$.  It is easy to see that a subgroup $H$ is
algebraically closed in $G$ if  and only if for every finite system of equations $S = \{E_i(x_1, \ldots,x_n, H) = 1 \mid i = 1, \ldots, m\}$ with constants from  $H$ the following holds: if $S$  has a solution in $G$ then it has a solution in $H$.  By the same token, a group $H$ is  algebraically closed in a class of groups $\CK$ if and only if $H$ is algebraically closed in every extension $H \leq G$ with $G \in \CK$. Replacing systems of equations in the definitions above by systems of equations and inequalities with coefficients in $H$ one gets the notion of existentially closed groups in $\CK$, as well as all corresponding variations.

  Groups algebraically (existentially) closed in the class of all groups   were introduced by Scott in \cite{Scott}.  They have been thoroughly
studied in 1970-80's, see, for example,   papers by Macintyre \cite{Mac}, Eklof and  Sabbagh \cite{ES}, Belegradek \cite{B1,B2},  Ziegler \cite{Z}; and books by Hodges \cite{Hodges1} and Higman and Scott \cite{HS}. Nowadays, a lot more is known about groups algebraically or existentially closed in various specific classes of groups $\CK$, in particular, when $\CK$ consists of various  nilpotent, solvable, or locally finite groups. For details  we refer to a survey by Leinen \cite{Leinen}.  On the other hand,  not much is known about algebraically or existentially closed groups in the classes of groups with some presence of negative curvature, for example, in the classes of groups universally equivalent to a given hyperbolic group. To this end we would like to mention a work by Jaligot and Ould Houcine \cite{JH} on existentially closed CSA-groups  (see \cite{MR} for definitions and various properties of CSA groups). Notice, that groups universally equivalent to a given torsion-free hyperbolic group are CSA.

 Our interest to this topic is twofold.  The first part comes from studying  Krull dimension and Cantor-Bendixon rank of groups.
  To explain, recall first that a  subgroup $H$ of a group $G$  is called a  {\em retract } of $G$, if
there is a homomorphism (termed {\em  retraction}) $\phi: G \to H$ which is identical on $H$.
In Section \ref{se:prelim}, Proposition \ref{pr:1},  we show (and it is easy)   that every retract of $G$ is algebraically closed in $G$. Furthermore, if   $G$ is finitely presented and $H$ is finitely generated then the converse is also true.  This result still holds for finitely generated $G$ which are {\em equationally Noetherian} (for definition see   Section \ref{se:prelim} below). 
However, to characterize   existentially closed subgroups one needs a stronger condition. Namely, an extension $H \leq G$ is called {\em discriminating } if
 for every finite subset  $K\subseteq G$ there is a retraction $\phi:G \to H$ such that the restriction of $\phi$ onto $K$ is injective. 
 It is easy to see again  that if $H \leq G$ is discriminating   then the subgroup $H$ is
existentially  closed in $G$; and furthermore, if 
  $G$ is finitely generated relative to $H$ and  $H$ is
equationally Noetherian then the converse is also true (Proposition \ref{pr:1b}).  If  a group $G$ is equationally Noetherian then Zariski topology on its Cartesian product  (affine space) $G^k$, defined by algebraic sets as a pre-basis of closed sets, is Noetherian \cite{BMR1}.  It was shown in \cite{MRns} that in this case the Zariski dimension of irreducible algebraic sets $Y$ in  $G^k$  is equal to the  Krull dimension of their coordinate groups $G_Y$.   Here the {\em Krull dimension} of $G_Y$ is defined (as usual) as the supremum of   the lengths of  chains 
$p_0 \subset p_1 \subset \ldots  \subset p_k$
of distinct prime ideals $p_i$ in $G_Y$, where a prime ideal in $G_Y$ is a normal subgroup $N$ of $G_Y$ such that   $N \cap G = 1$ (the subgroup $G$ naturally embeds into $G_Y$ and hence into $G_Y/N$) and   $G \leq G_Y/N$  is  a discriminating extension.

Another part of our interest in various versions of algebraic closures  comes from research on verbal width (or length) of elements in groups. To explain we need some notation. 
For  $w= w(x_1, \ldots,x_n) \in F(X)$ and a group $G$ by $w[G]$ we denote the set of all $w$-elements in $G$, i.e., $w[G] = \{w(g_1, \ldots,g_n) \mid g_1, \ldots, g_n \in G\}$.  The {\em verbal subgroup} $w(G)$ is the subgroup of $G$ generated by $w[G]$.
The {\em $w$-width} (or {\em $w$-length}) $l_{w}(g) = l_{w, G}(g)$  of an element $g \in w(G)$ is the  minimal natural number $n$ such that $g$ is a product of  $n$ $w$-elements in $G$ or their inverses; the width of $w(G)$ is the supremum of widths of its elements.    Usually, it  is  very hard to compute the $w$-length of a given  element $g \in w(G)$ or the width of $w(G)$. 
The first question of this type  goes back to the Ore's paper \cite{Ore} where he asked whether  the commutator length (i.e., the $[x,y]$-length) of every element in a non-abelian finite simple group is equal to $1$ ({\em Ore Conjecture}). Only recently the conjecture was established by  Liebeck, O'Brian, Shalev and  Tiep \cite{LBST}. For recent spectacular results on the $w$-length in finite simple groups,  we refer to the papers \cite{LS}, \cite{Shalev} and a  book \cite{Segal}. For instance, A. Shalev \cite{Shalev} proved that for any nontrivial word $w,$ every element of every sufficiently large finite simple group is a product of three values of $w.$ 

Two important questions arise naturally for an extension $H \leq G$ and a  given word $w \in F(X)$:
\begin{itemize}
\item when it is true that  $w(H) = w(G) \cap H$ or $w[H] = w[G] \cap H$?
\item when $l_{w, G}(h) = l_{w,H}(h)$ for a given $h \in w(H)$? 
\end{itemize} 

  To approach these questions we introduce a new notion of {\em verbally closed} subgroups.
  \begin{definition}
  
A subgroup $H$ of $G$ is called  {\em verbally closed} if for any word $w \in F(X)$ and $h \in H$ an equation $w(x_1, \ldots, x_n) = h$ has a solution in $G$ if and only if it has a solution in $H$, i.e., $w[H] = w[G] \cap H$ for every $w \in F(X)$.
  \end{definition} 

Notice, that verbally closed subgroups fit in the general picture of  algebraic closures, where the closure operator is defined by the set $\Phi$ of all single equations of the type $w(x_1, \ldots, x_n) = h$, where $w \in F(X)$ and $h \in H$. In general, single equations  do not suffice to get the standard algebraic  closures in groups (see examples  in the class of 2-nilpotent torsion-free groups due to Baumslag and Levin \cite{BL}).

Not much is known in general about verbally closed subgroups of a given group $G$. For instance, the following basic questions are still open for most non-abelian groups: 
\begin{itemize}
\item Is there an algebraic description of verbally closed subgroups of $G$? 
\item Is  the class of verbally closed subgroups of $G$ closed under intersections? 
\item Does there exist the verbal closure $vcl(H)$  of a given subgroup $H$ of $G$? Here $vcl(H)$ is the least (relative to inclusion) verbally closed subgroup of $G$ containing $H$.  
\item If $H$ is a finitely generated subgroup of $G$ is $vcl(H)$ (if it exists) also finitely generated?
\item Given $H \leq G$ can one find the generators of $vcl(H)$ (if it exists) effectively?
\end{itemize}

In this papers we address all the questions above in the case of  a free group $G$.
In Section \ref{se:retracts}  we prove the main result  of the paper that answers (for free groups) to the first question above: 

\medskip
\noindent
{\bf Theorem 1.} {\em  Let $F$ be a  free group of a finite rank. Then for a subgroup $H$  of $F$ the following conditions are equivalent:
\begin{enumerate}
\item [a)] $H$ is a retract of $F$.
\item [b)] $H$ is a verbally closed  subgroup of $F$.
\item [c)] $H$ is an algebraically closed subgroup of $F$.
\end{enumerate}
}

\medskip
This result clarifies the nature of verbally or algebraically closed subgroups in $F$. Surprisingly, the "weak" verbal closure operator in this case is as strong as  the standard one. Since quite a lot is known about retracts of a free group one can now easily derive some corollaries of the main result.
The proof of the theorem is rather short, but it is based on several deep known results. Firstly we use the fact, due to Lee,  that every non-abelian free group of finite rank has $C$-test words \cite{Lee}. Secondly, precise values of  the commutator verbal length of the derived subgroups in free nilpotent groups play an important part here.

In Section \ref{se:corollary} we study verbal (= algebraic) closures of subgroups  in  a given nonabelian free group $F_r$ of rank $r.$ It immediately follows from Theorem 1 that verbally (algebraically)  closed subgroup of $F_r$  are finitely generated. Furthermore,  the intersection of an arbitrary family of verbally  (algebraically) closed subgroups in $F_r$   is again verbally (algebraically) closed (see Proposition \ref{pr:intersection}), which proves the following theorem.

\medskip
\noindent
{\bf Theorem 2.} {\em 
Let $H$ be a subgroup  of  a free group $F_r$ with basis $\{f_1, ..., f_r\}.$ . Then there exists a unique minimal (with respect to inclusion) verbally closed subgroup $vcl(H)$ of $F_r$ containing $H.$ The subgroup $vcl(H)$  is also the unique minimal algebraically closed subgroup in $F_r$ containing $H.$
}

\medskip
Observe, that free factors  of $F_r$ are, of course,  retracts, but the converse is not true. Particular series of such examples (with some other interesting properties) are constructed by Martino and Ventura \cite{MV} and Ciobanu and Dicks \cite{CD}.

At the end of the section we study some algorithmic questions related to verbal closures in free groups.
The main results are collected in the following theorem.

\medskip
\noindent
{\bf Theorem 3.} {\em
Let $F_r$ be a free group with basis $\{f_1, ..., f_r\}.$  Then the following holds:
\begin{itemize}
\item [a)] There is an algorithm to decide if  a given finitely generated subgroup of $F_r$ is verbally (algebraically) closed or not.
\item [b)] There is an algorithm to construct $vcl(H)$, i.e., to find a basis of $vcl(H)$ for a given finitely generated subgroup $H$ of $F_r.$
\end{itemize}
}

\medskip
 We note, in passing, that  Diekert, Gutierrez,  and Hagenah gave an algorithm to solve equations with rational constraints  in free
 groups \cite{Diekert}, so  given an extension $H \leq F,$ where $F$ is a free group of a finite rank, one can check algorithmically whether or not an
 equation $E(x_1, \ldots, x_n, F) = 1$ (with
 coefficients in $F$)  has a solution in $F$, provided some  fixed distinguished variables satisfy an extra requirement $x_i \in H$. This gives a
 useful complementary tool to deal with algorithmic problems  related to $H.$

Recently, in \cite{JH} Ould Houcine and  Vallino  studied another notion of an algebraic closure of a subset $A$ of a group $G$, which is reminiscent to adding roots of a polynomial in one variable in a field.  In this case, an element $b$ is termed {\em algebraic} over $A$ if there is a formula $\phi(x)$ of group language such that $\phi(b)$ holds in $G$ and there are only finitely many other elements in $G$ satisfying $\phi$.
The set $ac(A)$ of all algebraic over $A$ elements forms a subgroup of $G$. How much this subgroup relates to the algebraic or verbal closure of $A$ - is not clear. However, there is one common component in all the variations of algebraic closures  discussed here - all of them form algebraic extensions in the sense of \cite{KM,MVW}.  By definition  subgroups $H \leq K$   of a free group $F$  form an {\em algebraic extension} if $H$ is not a subgroup of a proper free factor of $K$, i.e., there is no "purely transcendenyal" non-trivial extension over $H$ in $K$.    Since every finitely generated subgroup of $F$ has only finitely many such algebraic extensions  and one can find all of them effectively, this gives an approach to algorithmic problems for all types of algebraic closures and extensions  in free groups.

At the end of the paper (Section \ref{se:problems}) we discuss some related open problems.

\section{Preliminaries}
\label{se:prelim}

In this section we collect some known or simple facts on verbally, algebraic or existentially closed subgroups of groups. 

 At the beginning we mention a few simple, but useful general results. Recall that a group $G$ is called {\em equationally Noetherian} if for any $n$ every system of equations in $n$ variables with coefficients from $G$ is equivalent (has the same solution set in $G$)  to  some finite subsystem of itself \cite{BMR1,BMRom}.
 
 \begin{definition}
  An extension $H \leq G$ is called {\em discriminating } if
 for every finite subset  $K\subseteq G$ there is a retraction $\phi:G \to H$ such that the restriction of $\phi$ onto $K$ is injective. 
\end{definition}

\begin{proposition}
\label{pr:1} Let $H \leq G$ be a group  extension. Then the following holds:
\begin{itemize}
\item[1)] If $H$ is a retract of $G$ then $H$ is algebraically closed in $G.$
\item [2)]  Suppose $G$ is finitely presented and $H$ is finitely generated. Then $H$ is algebraically closed in $G$ if and only if  $H$ is a retract.
\item [3)] Suppose $G$  and $H$ are  finitely generated and $H$ is 
equationally Noetherian. Then $H$ is algebraically closed in $G$ if and only if  $H$ is a retract.
\end{itemize}
\end{proposition}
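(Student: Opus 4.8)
The plan is to prove the three parts in order, relying on the algebraic characterization of algebraic closure via systems of equations over $H$ that was established in the introduction: $H$ is algebraically closed in $G$ iff every finite system $S$ of equations with constants from $H$ that has a solution in $G$ already has a solution in $H$.

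Part 1) is the easy direction and I would prove it first. Suppose $\phi\colon G \to H$ is a retraction, so $\phi|_H = \mathrm{id}_H$. Let $S = \{E_i(x_1,\ldots,x_n,H) = 1\}$ be a finite system of equations with constants from $H$ that has a solution $x_i \mapsto g_i \in G$. Applying $\phi$ to each equation and using that $\phi$ fixes the constants from $H$, I get $E_i(\phi(g_1),\ldots,\phi(g_n),H) = \phi(E_i(g_1,\ldots,g_n,H)) = \phi(1) = 1$ in $H$. Thus $x_i \mapsto \phi(g_i) \in H$ is a solution in $H$, so $H$ is algebraically closed. This step is purely formal.

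For parts 2) and 3) the nontrivial direction is showing that an algebraically closed $H$ is a retract; the converse in each is immediate from part 1). The key idea is to encode a presentation of $G$ relative to $H$ as a single system of equations over $H$ and use algebraic closure to produce the retraction. More precisely, choose generators $g_1,\ldots,g_n$ of $G$; since $H \leq G$ I may assume the chosen generators of $H$ are among the images of some words, and I introduce a variable $x_j$ for each $g_j$. For each generator $h$ of $H$, expressed as a word $h = W_h(g_1,\ldots,g_n)$, I add the equation $W_h(x_1,\ldots,x_n) = h$ (a single $w$-type equation with the constant $h \in H$ on the right), which forces any solution to send the $x_j$ to elements of $G$ whose $W_h$-combinations recover the generators of $H$. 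For each defining relator $r(g_1,\ldots,g_n) = 1$ of $G$ I add the equation $r(x_1,\ldots,x_n) = 1$. The identity assignment $x_j \mapsto g_j$ solves this system in $G$, so by algebraic closure it has a solution $x_j \mapsto h_j \in H$. Mapping $g_j \mapsto h_j$ then respects all defining relators of $G$, hence extends to a homomorphism $\phi\colon G \to H$; and the equations coming from the generators of $H$ guarantee $\phi$ restricts to the identity on $H$, so $\phi$ is a retraction.

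The main obstacle, and the reason parts 2) and 3) require hypotheses, is finiteness of the system: algebraic closure only transfers solvability for \emph{finite} systems of equations. In part 2) finite presentability of $G$ gives finitely many defining relators, and finite generation of $H$ gives finitely many generator-recovering equations, so the whole system is finite and the argument goes through directly. In part 3), $G$ need not be finitely presented, so the relator set may be infinite; here I would instead use that $H$ is equationally Noetherian, which means the (possibly infinite) system over $H$ is equivalent to a finite subsystem. I must take some care that the correct notion of equational Noetherianity applies to the system with constants from $H$ and that passing to the finite equivalent subsystem preserves the property that the identity assignment remains a solution in $G$ and that a solution in $H$ still yields a well-defined retraction; making this reduction precise, so that solvability of the finite subsystem in $H$ genuinely forces all of the original relators of $G$ to be respected, is the delicate point of the proof.
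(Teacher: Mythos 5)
Your proposal is correct and follows essentially the same route as the paper: applying the retraction to a solution for part 1), and for parts 2) and 3) encoding the generators of $H$ and the defining relators of $G$ as a system of equations over $H$ whose solvability in $H$ yields the retraction, with equational Noetherianity of $H$ supplying the reduction to a finite subsystem in part 3). The ``delicate point'' you flag is resolved exactly as you suggest, since equivalence of the finite subsystem to the full system over $H$ means any solution in $H$ of the former already satisfies all the original relators.
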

\begin{proof}
Let $\pi: G \rightarrow H$ be a retraction.  Then if a finite system of equations 
 $\Phi (x_1, \ldots, x_n, H)$ holds in $G$ on elements $g_1, \ldots,
 g_n$ then  $\Phi (x_1, \ldots, x_n, H)$ holds in $H$ on elements
 $\pi(g_1), \ldots, \pi(g_n)$, which  proves 1).
 
 To prove  2) assume that  $H$ is generated by a finite set $h_1, \ldots, h_m$ and $G$ has a finite presentation 
$G = \langle a_1, ..., a_n \mid r_1, \ldots,r_s\rangle$.   For $i = 1, ..., m$  fix a presentation $h_i = v_i(a_1, ..., a_n)$  
of $h_i$ as a word in the generators of $G$.  Then the  system of equations  
\begin{equation}\label{eq:system}
\begin{array}{l}
 v_1(x_1, ..., x_n) =h_1,  \ldots ,  v_m(x_1, ..., x_n) = h_m,  \\

\medskip
  r_1(x_1, ..., x_n) = 1, \ldots, r_s(x_1, ..., x_n) =1 
\end{array}
\end{equation}
with constants $h_i \in H$ and variables $x_1, ..., x_n$  has a solution $x_1 = a_1, \ldots, x_n = a_n$ in $G$.  Hence it has a solution $x_1 = b_1, \ldots, x_n = b_n$  in $H$.  Now, the map $a_1 \to b_1, \ldots a_n \to b_n$ defines a retraction of  $G$ onto $H,$ as claimed.

 3) is similar to 2) (see also the argument in the proof of 2) in Proposition \ref{pr:1b}).

\end{proof}

\begin{proposition}
\label{pr:1b} Let $H \leq G$ be a group  extension. Then the following holds:
\begin{itemize}
\item [1)]  If $H \leq G$ is discriminating   then the subgroup $H$ is
existentially  closed in $G.$
\item [2)] Suppose that  $G$ is finitely generated relative to $H$ and  $H$ is
equationally Noetherian. Then $H$ is existentially closed in $G$ if and only if the extension $H \leq G$ is discriminating.
\end{itemize}
\end{proposition}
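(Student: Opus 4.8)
The plan is to treat existential closure as the ability to solve, inside $H$, any finite system of equations \emph{and} inequalities with coefficients in $H$ that is solvable in $G$, and to read a retraction off such a solution. For part 1), suppose $H \leq G$ is discriminating and that a finite system $S$ consisting of equations $E_i(x_1,\ldots,x_n,H)=1$ ($i=1,\ldots,m$) and inequalities $D_j(x_1,\ldots,x_n,H)\neq 1$ ($j=1,\ldots,l$) has a solution $x_i \mapsto g_i$ in $G$. Put $K=\{1, D_1(g_1,\ldots,g_n),\ldots,D_l(g_1,\ldots,g_n)\}$; since the $g_i$ satisfy the inequalities, the listed elements are pairwise distinct in $G$. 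By discrimination there is a retraction $\phi\colon G\to H$ injective on $K$. Setting $h_i=\phi(g_i)\in H$ and using that $\phi$ is a homomorphism fixing the constants from $H$, each equation gives $E_i(h_1,\ldots,h_n,H)=\phi(E_i(g_1,\ldots,g_n,H))=\phi(1)=1$, while injectivity on $K$ gives $D_j(h_1,\ldots,h_n,H)=\phi(D_j(g_1,\ldots,g_n,H))\neq \phi(1)=1$. Hence $x_i\mapsto h_i$ solves $S$ in $H$, so $H$ is existentially closed in $G$.

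For part 2), the implication ``discriminating $\Rightarrow$ existentially closed'' is exactly part 1), so it remains to prove the converse under the stated hypotheses. Write $G=\langle H, a_1,\ldots,a_n\rangle$ and let $\pi\colon H\ast F(x_1,\ldots,x_n)\to G$ be the canonical epimorphism fixing $H$ and sending $x_i\mapsto a_i$. The kernel of $\pi$ produces the (in general infinite) set of relations $R=\{\,w(x_1,\ldots,x_n,H)=1\,\}$ of $G$ over $H$, a system of equations in $n$ variables with coefficients in $H$. The key observation is that a tuple $(b_1,\ldots,b_n)\in H^n$ is a solution of $R$ in $H$ if and only if the assignment $x_i\mapsto b_i$, $h\mapsto h$ descends to a retraction $G\to H$. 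Since $H$ is equationally Noetherian, $R$ is equivalent over $H$ to a finite subsystem $r_1=1,\ldots,r_s=1$.

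Now fix a finite $K\subseteq G$ and form the finite set $D=\{\,k(k')^{-1}\mid k,k'\in K,\ k\neq k'\,\}$ of non-trivial elements of $G$, writing each $d\in D$ as a word $d=u_d(a_1,\ldots,a_n,H)$. The finite mixed system consisting of $r_1(x_1,\ldots,x_n,H)=1,\ldots,r_s(x_1,\ldots,x_n,H)=1$ together with the inequalities $u_d(x_1,\ldots,x_n,H)\neq 1$ ($d\in D$) has the solution $x_i\mapsto a_i$ in $G$: the $r_j$ hold because they are relations, and $u_d(\bar a,H)=d\neq 1$. By existential closure this system has a solution $x_i\mapsto b_i$ in $H$. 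Satisfying $r_1,\ldots,r_s$ forces, by equational Noetherianity, every relation in $R$ to hold, so $x_i\mapsto b_i$ defines a retraction $\phi\colon G\to H$; and $\phi(k(k')^{-1})=u_d(\bar b,H)\neq 1$ shows $\phi$ is injective on $K$. Thus $H\leq G$ is discriminating.

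The main obstacle is precisely the reduction in the converse of part 2): the relations of $G$ over $H$ constitute an a priori infinite system, so existential closure (which only handles finite systems) cannot be applied to them directly. It is the equational Noetherian hypothesis on $H$ that collapses this system to finitely many equations, after which they can be combined with the finitely many injectivity inequalities into a single finite system to which existential closure does apply. The finite-generation-relative-to-$H$ hypothesis is what guarantees that the retraction is determined by finitely many values $\phi(a_i)$ and that every element of $K$ is expressible as a word $u_d$ in those generators.
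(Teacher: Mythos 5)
Your proof is correct and follows essentially the same route as the paper's: for 1) you push a solution through a discriminating retraction (making explicit the finite set on which injectivity is needed, which the paper glosses over), and for 2) you encode the relations of $G$ over $H$ as a system of equations, use equational Noetherianity to pass to a finite subsystem, adjoin the finitely many injectivity inequalities, and apply existential closure to read off a discriminating retraction. No substantive differences.
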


\begin{proof}
Let $H \leq G$ be a  discriminating extension.  Suppose some elements  $a_1, \ldots,
 a_n \in G$  satisfy in $G$ a given finite  system $\Psi(x_1, \ldots, x_n, H)$ of equations and inequalities
 with constants from $H.$ Then there is a retraction $\pi: G \rightarrow H$ 
 such that $\pi(a_1), \ldots, \pi(a_n)$
satisfy precisely the same systems of equations and inequalities,
i.e.,  $\Psi(x_1, \ldots, x_n, H)$ holds in $H$ on  $\pi(a_1), \ldots, \pi(a_n)$.  This proves 1).

2) was  proven in  \cite{BMR1}, but we give a quick sketch of the proof here.  Let $B_n =  \{b_1, \ldots, b_n\}$ be a finite generating set of $G$ relative to $H.$ Denote by $R = R(b_1, \ldots, b_n, H) = 1$ a set of defining relations of $G$ relative to $B_n \cup H.$ One may correspond to this set  a system of equations $\Phi = \Phi (x_1, \ldots, x_n, H) $ in variables $x_1, \ldots, x_n$ and constants from $H$.  Since $H$ is equationally Noetherian the system $\Phi $ is equivalent in $H$ to some finite subsystem, say $\Phi_0 $. A given finite system  $\Psi $ of inequalities in $G$ can be rewritten into an equivalent finite system $\Psi_0$ of inequalities in $X_n \cup H.$ Since $H$ is existentially closed in $G$ the finite system of equations and inequalities $\Phi_0 \cup \Psi_0$ has a solution in $H.$ This solution gives a retraction $G \to H$ which discriminates a given finite set of elements in $G$ (which was encoded in the system $\Psi_0.$)

\end{proof}

\begin{lemma}
\label{le:1}
All types of extensions introduced above are
transitive, i.e., every chain of  extensions of a given type
results in an extension of the same type.
\end{lemma}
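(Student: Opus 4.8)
The plan is to verify transitivity separately for each notion, since in every case the argument reduces to a short chaining of definitions or a composition of maps. Fix a chain $H \leq K \leq G$ in which both $H \leq K$ and $K \leq G$ are extensions of the type under consideration, and aim to show that $H \leq G$ is of the same type. Throughout I use that $H \subseteq K$, so any constants or coefficients taken from $H$ automatically lie in $K$ as well.

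For retracts I would compose the two retractions. If $\pi_1 \colon K \to H$ and $\pi_2 \colon G \to K$ are retractions, then $\pi_1 \circ \pi_2 \colon G \to H$ is a homomorphism with image in $H$; since $\pi_2$ restricts to the identity on $K \supseteq H$ and $\pi_1$ restricts to the identity on $H$, the composite is the identity on $H$. Hence $H$ is a retract of $G$.

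For the three closure notions --- algebraically, existentially, and verbally closed --- I would peel off one layer at a time. Suppose the relevant object (a finite system of equations in the algebraic case, a finite system of equations and inequalities in the existential case, or a single equation $w(x_1, \ldots, x_n) = h$ with $h \in H$ in the verbal case), all of whose constants lie in $H$, has a solution in $G$. Because these constants lie in $H \subseteq K$ and $K$ is closed of the given type in $G$, the object already has a solution in $K$. Its constants still lie in $H$ and $H$ is closed of the same type in $K$, so it has a solution in $H$. This is precisely what closure of $H$ in $G$ demands.

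Finally, for discriminating extensions the argument again rests on composing retractions, but now a finite set must be tracked. Given a finite subset $S \subseteq G$, discrimination of $K \leq G$ yields a retraction $\psi \colon G \to K$ injective on $S$; then $\psi(S)$ is a finite subset of $K$ of the same cardinality as $S$. Applying discrimination of $H \leq K$ to $\psi(S)$ produces a retraction $\phi \colon K \to H$ injective on $\psi(S)$. The composite $\phi \circ \psi \colon G \to H$ is a retraction (by the computation of the retract case) and is injective on $S$, since $\psi$ separates the points of $S$ and $\phi$ separates the points of $\psi(S)$. Keeping the finite witness set finite and feeding its image into the second discrimination is the only step requiring a little care; every other case is a direct chaining of the definitions.
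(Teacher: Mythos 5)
Your proof is correct and matches the paper's approach: the paper simply states that the lemma follows ``directly from the definitions,'' and your case-by-case verification (composing retractions, chaining the closure conditions through the intermediate subgroup, and tracking the finite witness set for discrimination) is exactly the routine argument being alluded to.
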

\begin{proof} Directly from the definitions.

\end{proof}

\section{Description of verbally (algebraically) closed subgroups of  free groups}
\label{se:retracts}

We start with several remarks. A subgroup $R$ of $G$ is a retract if  and only if  it has a normal complement $N$ in $G,$ i.e.  a normal subgroup $N$ of $G$ such that $G = RN$ and $R \cap N = 1.$ 
It is easy to see that every direct or free factor of $G$ is a retract. In particular, the trivial subgroup of $G$ is a retract.

An element $a$ of a free abelian group $A_n$ with basis $\{a_1, ..., a_n\}$  is called {\em primitive} if it can be included into some basis of $A_n.$ It is known that $a = a_1^{k_1} ... a_n^{k_n}$, where $ k_1, ..., k_n \in \mathbb{Z}$,  is primitive if and only if $gcd(k_1, ..., k_n) = 1$.

\begin{lemma}
\label{le:cycret}
Let $F_r$ be a free group of rank $r,$ and $H = gp(h)$  is a cyclic subgroup of $F_r$ generated by a non-trivial element $h \in F_r$. Then the following conditions are equivalent:
\begin{itemize}
\item [1)] $H$ is verbally closed in $F_r$;
\item [2)] $H$ is a retract of $F_r$;
\item [3)] the image of $h$ in the abelianization $F_r/[F_r,F_r]$ is primitive. 
\end{itemize}
\end{lemma}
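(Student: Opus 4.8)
The plan is to establish the cycle of implications $2) \Rightarrow 1) \Rightarrow 3) \Rightarrow 2)$. The implication $2) \Rightarrow 1)$ is immediate: if $\pi\colon F_r \to H$ is a retraction and an equation $w(x_1,\ldots,x_n) = h$ with $h \in H$ has a solution $x_i = g_i$ in $F_r$, then applying $\pi$ yields $w(\pi(g_1),\ldots,\pi(g_n)) = \pi(h) = h$, a solution in $H$; hence $H$ is verbally closed. (This is also a special case of Proposition \ref{pr:1}(1), since verbal closure is weaker than algebraic closure.)

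For $3) \Rightarrow 2)$ I would exploit that $H \cong \mathbb{Z}$ is abelian, so any retraction onto $H$ factors through the abelianization. Write $\bar h = (k_1,\ldots,k_r)$ for the image of $h$ in $F_r/[F_r,F_r] \cong \mathbb{Z}^r$. Primitivity of $\bar h$ means $\gcd(k_1,\ldots,k_r) = 1$, so there are integers $\lambda_1,\ldots,\lambda_r$ with $\sum_i \lambda_i k_i = 1$. Define $\psi\colon F_r \to \mathbb{Z}$ on the basis by $f_i \mapsto \lambda_i$; then $\psi(h) = \sum_i \lambda_i k_i = 1$. Identifying $\mathbb{Z}$ with $H$ via $1 \mapsto h$, the composite $\phi\colon F_r \to H$, $g \mapsto h^{\psi(g)}$, is a homomorphism with $\phi(h^n) = h^{n\psi(h)} = h^n$, so $\phi$ restricts to the identity on $H$. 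Thus $H$ is a retract of $F_r$.

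The heart of the argument is $1) \Rightarrow 3)$, which I would prove in contrapositive form: assuming $\bar h$ is not primitive, I construct a single verbal equation witnessing the failure of verbal closure. Non-primitivity gives an integer $d > 1$ with $\bar h = d\bar g$ for some $\bar g \in \mathbb{Z}^r$ (when $\bar h = 0$, any $d \geq 2$ and $\bar g = 0$ work). Lifting $\bar g$ to an element $g \in F_r$, the element $g^{-d}h$ lies in $[F_r,F_r]$ and hence is a product of finitely many commutators, so $h = g^d \prod_{i=1}^s [a_i,b_i]$. Consider the word
\[
w(y,u_1,v_1,\ldots,u_s,v_s) = y^d \prod_{i=1}^s [u_i,v_i].
\]
Setting $y = g$, $u_i = a_i$, $v_i = b_i$ gives $w = h$, so $w = h$ has a solution in $F_r$. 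On the other hand, evaluating $w$ on elements of the cyclic, hence abelian, group $H = gp(h)$ kills the commutator part and produces $h^{dn}$ for some $n \in \mathbb{Z}$; since $h$ has infinite order and $d > 1$, the equality $h^{dn} = h$ is impossible. Therefore $w = h$ has no solution in $H$, so $H$ is not verbally closed.

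I expect the only genuine obstacle to be the construction in $1) \Rightarrow 3)$ — namely, choosing a word all of whose exponent sums are divisible by $d$ (so that any solution over the abelian group $H$ forces a divisibility obstruction in the abelianization) while still realizing $h$ as a value in $F_r$. The two standard ingredients making this work are that an element of a free abelian group is primitive exactly when its coordinates are coprime, and that every element of $[F_r,F_r]$ has finite commutator length. The remaining implications are routine once one notes that $H \cong \mathbb{Z}$ lets everything be read off from the abelianization.
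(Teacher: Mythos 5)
Your proposal is correct and follows essentially the same route as the paper: a retraction through the abelianization for $3)\Rightarrow 2)$, and for $1)\Rightarrow 3)$ a single word equation, all of whose exponent sums are divisible by $d>1$, that takes the value $h$ in $F_r$ but only values $h^{dn}$ on the abelian group $H$. The only cosmetic difference is that you package the non-primitive case uniformly via $h=g^d\prod[a_i,b_i]$, whereas the paper substitutes variables for the basis elements in the normal form $h=f_1^{k_1}\cdots f_r^{k_r}h'$ and treats the subcases $h\in[F_r,F_r]$ and $\gcd(k_i)=d>1$ separately.
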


\begin{proof}
Let $\{f_1, ..., f_r\}$ be a basis of $F_r$.
The element $h \in F_r$ can be expressed uniquely in the form
\begin{equation}
\label{eq:elfr} 
h = f_1^{k_1} ... f_r^{k_r}h'(f_1, \ldots,f_r), 
\end{equation}
where $ k_1, ..., k_r \in \mathbb{Z}$ and $ h'(f_1, \ldots,f_r)$ is a product of commutators of  words in $f_1, \ldots,f_n.$ 

To show that 1) $\rightarrow$ 3) assume that $h$   has a non primitive image  in $ F_r/[F_r,F_r]$, i.e.,   either $h \in [F_r,F_r]$   or $gcd(k_1, ..., k_n) = d > 1.$

Suppose first that  $h \in [F_r,F_r]$, so $ k_1 = \ldots =  k_r = 0$.  Replacing  each $f_i$ by  a new variable $x_i$ in (\ref{eq:elfr}) one gets an equation $h = x_1^{k_1} ... x_r^{k_r}h'(x_1, \ldots,x_r)$, with $h$ as a constant from $H$,  which has a solution in $F_r$.  However,  this equation does not have a solution in $H$, since $H$ is abelian, so $h'(h_1, \ldots,h_r) = 1$ for any $h_1, \ldots h_r \in H$. This shows that $H$ is not verbally closed in $F_r$ - contradiction. So $h \not \in [F_r,F_r]$.  Then in this case 
 $gcd(k_1, ..., k_r) = d > 1.$ The equation 
 $$h = x_1^{k_1} ... x_r^{k_r}h'(x_1, \ldots,x_r)$$
  still has a solution in $F_r$, but for any $h_1, \ldots h_r \in H$ one has 
  $$h_1^{k_1} ... h_r^{k_r}h'(h_1, \ldots,h_r) = h_1^{k_1} ... h_n^{k_r}  = h^{ds} \neq h,$$
  for some $s \in \mathbb{Z}$. Hence, the equation does not have a solution in $H$, so $H$ is not verbally closed - contradiction.  This proves  
1) $\rightarrow$ 3).

  To show that 3) $\rightarrow$ 2) assume that $h$
is  primitive. Then  there are integers $l_1, ..., l_r$ such that $k_1l_1 + ... k_rl_r = 1.$ Now we  define a homomorphism  $\varphi : F_r \rightarrow H = gp(h)$ by putting $\varphi (f_i) = h^{l_i}$ for $i = 1, ..., r.$ 
Since $H$ is abelian $\varphi(h') = 1$, so $\varphi(h) = h$ and $\varphi$ is a retraction. Hence $H$ is a retract, as claimed.

 2) $\rightarrow$ 1)   follows from Proposition \ref{pr:1} statement 1). 

\end{proof}

Below we denote by  $N_{rc} = F_r/\gamma_{c+1}F_r$ a free nilpotent group of rank $r$ and class $c.$ As usual $\gamma_{l}G$ 
denote the $l$th member of the lower central series of a group  $G.$

\begin{proposition}
\label{pr:vcret}
Every verbally closed subgroup  $H$ of a free group $F_r$ has rank at most $r$.
\end{proposition}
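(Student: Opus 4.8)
The plan is to control the rank of $H$ through its abelianization. By the Nielsen--Schreier theorem $H$ is itself free, of some rank $s$ (a priori possibly infinite), and the goal is to show $s \le r$. Since $F_r/[F_r,F_r] \cong \MZ^r$, the homomorphism induced by the inclusion $H \hookrightarrow F_r$,
\[
\bar\iota : H/[H,H] \longrightarrow F_r/[F_r,F_r],
\]
has image a subgroup of $\MZ^r$ and kernel $(H\cap[F_r,F_r])/[H,H]$. Thus everything reduces to understanding $H \cap [F_r,F_r]$, and the crux of the argument is to show that verbal closedness forces
\[
H \cap [F_r,F_r] = [H,H].
\]

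First I would establish this equality. The inclusion $[H,H] \subseteq H\cap[F_r,F_r]$ is immediate. For the reverse inclusion, take a non-trivial $h \in H \cap [F_r,F_r]$. As an element of the derived subgroup of $F_r$, it is a product of finitely many commutators, say $h = [a_1,b_1]\cdots[a_g,b_g]$ with $a_i,b_i \in F_r$. Consider the commutator word $w(x_1,\ldots,x_{2g}) = [x_1,x_2][x_3,x_4]\cdots[x_{2g-1},x_{2g}]$. Then the equation $w = h$ has a solution in $F_r$, namely $x_{2i-1}=a_i,\ x_{2i}=b_i$. Since $H$ is verbally closed we have $w[H] = w[F_r]\cap H$, and $h$ lies in the right-hand side, so $h \in w[H]$; that is, $h$ is a product of $g$ commutators of elements of $H$, whence $h \in [H,H]$. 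This is the only place where verbal closedness is used: choosing $w$ to be the commutator word of the appropriate length (the commutator length of $h$ in $F_r$) is the key, and essentially the only, idea.

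With the equality in hand the proposition follows formally. The map $\bar\iota$ is now injective, so $H/[H,H]$ embeds into $F_r/[F_r,F_r]\cong \MZ^r$ and is therefore free abelian of rank at most $r$. On the other hand, $H$ being free of rank $s$ gives that its abelianization $H/[H,H]$ is free abelian of the same rank $s$; comparing with the embedding into $\MZ^r$ yields $s \le r$, and in particular $s$ is finite. I do not expect a genuine obstacle at this stage: once the commutator word is invoked, the remainder is routine linear algebra over $\MZ$. The real difficulty is deferred to the full Theorem~1, where single power words together with the precise commutator widths of derived subgroups of free nilpotent groups $N_{rc}$ are needed to upgrade this rank bound all the way to the retract statement.
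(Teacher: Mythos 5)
Your proof is correct, but it takes a genuinely different and in fact more elementary route than the paper's. The paper argues by contradiction: assuming $\mathrm{rank}(H)=m>r$, it passes to the free nilpotent quotient $N_{m3}\simeq H/\gamma_4H$, invokes the Allambergenov--Roman'kov theorem to pick an element of $[N_{m3},N_{m3}]$ that is not a product of $m-1$ commutators, and then uses verbal closedness of $H$ together with the fact that every element of $[F_r,F_r]$ is a product of $r$ commutators modulo $\gamma_4F_r$ to express that element as a product of $r\le m-1$ commutators in $N_{m3}$ --- a contradiction. You instead use verbal closedness with the commutator word of length equal to the commutator length of a given $h\in H\cap[F_r,F_r]$ to conclude $H\cap[F_r,F_r]=[H,H]$, so that $H/[H,H]$ embeds into $F_r/[F_r,F_r]\cong\mathbb{Z}^r$ and the Nielsen--Schreier rank of $H$ is bounded by $r$ via abelianization. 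Both proofs hinge on the same mechanism (applying the verbal-closure hypothesis to a product-of-commutators word), but yours dispenses entirely with the precise commutator-width results in free nilpotent groups that the paper lists among the ``deep known results'' it relies on, replacing them with routine facts about subgroups of $\mathbb{Z}^r$; it also handles the a priori possibility of infinite rank cleanly, which the paper's reduction to a rank-$m$ quotient leaves implicit. For the purposes of this proposition your argument is strictly simpler and complete; the nilpotent-quotient machinery of the paper is not needed here.
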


\begin{proof}
Suppose $H$ is a verbally closed subgroup of $F_r$ of rank $m > r$, so $H \simeq F_m.$  Consider a free nilpotent group $N_{m3} \simeq H/\gamma_4H = F_m/\gamma_4F_m$ of rank $m$ and class $3$.

 It is known (see for instance \cite{Segal}, Corollary 1.2.6) that every element $g$ in the derived subgroup $[N_{rc},N_{rc}]$ of 
a free nilpotent group $N_{rc}$ can be written as a product of $r$ commutators. More precisely, if $\{z_1, ..., z_r\}$ is a basis of $N_{rc}$ then there are elements $g_1, \ldots, g_r \in N_{rc}$ such that 
\begin{equation}
\label{eq:commform1}
g = [g_1, z_1] ... [g_r, z_r].
\end{equation}
Allambergenov and Roman'kov proved in \cite{AR} that in the case when $r \geq 2$ and $c \geq 3$ there is an element $u_r $ in $[N_{rc},N_{rc}]$ which is not equal to any product of $r-1$ commutators in $N_{rc}$.

 Now we pick  an element $u_m \in [N_{m3}, N_{m3}]$  which can not be expressed as a product of $m-1$ commutators in $N_{m3} $.   Recall that  
$N_{m3} \simeq H/\gamma_4H.$   Denote by  $h$  a preimage of $u_m$ in $H,$ notice that $h \in [H,H]$. Since  $H \leq F_r$ and $[H,H] \leq [F_r,F_r]$ it follows from (\ref{eq:commform1}) that the element $h$ can be presented in the form

\begin{equation}
\label{eq:commform2}
h = [g_1, g_1'] \ldots [g_{r}, g_{r}']f',
\end{equation}

\noindent
where 
$g_1, g_1', ..., g_{r}, g_{r}'  \in F_r$  and $f' \in \gamma_{4}F_r.$ Replace every element $g_i, g_i^\prime, f^\prime$ by the corresponding  product $g_i(f_1, \ldots, f_r), g_i^\prime(f_1, \ldots, f_r), f^\prime(f_1, \ldots, f_r)$  of  elements from a fixed basis $\{f_1, \ldots, f_r\}$  of $F_r$. The resulting equality

$$
h = [g_1(y_1, \ldots, y_r), g_1'(y_1, \ldots, y_r)] \ldots [g_{r}(y_1, \ldots, y_r), g_{r}'(y_1, \ldots, y_r)]f'(y_1, \ldots, y_r),
$$
viewed as a system in  variables $y_1, \ldots, y_r$ and a constant $h \in H$, has a solution in $F_r$, hence in $H$. 
It follows that in $N_{m3} \simeq H/\gamma_4H$ the element $h$ can be expressed as a product of $r$ commutators. Since $r < m$ we get a contradiction with our choice of  $u_m$ and $h.$
This  proves the proposition. 

 \end{proof}

Let $r \geq 2$. A non-empty word $w(z_1, ..., z_m)$ is called a {\it C-test word} in $m$ letters for $F_r$ if for any two tuples $(g_1, ..., g_m)$ and $(v_1, ..., v_m)$ of elements of $F_r$ the following holds: if  $w(g_1, ..., g_m) = w(v_1, ..., v_m) \not= 1$  then there is an element $s \in F_r$ such that $s^{-1}g_is = v_i, i = 1, \ldots, m$.  In \cite{Ivanov} Ivanov introduced and constructed first C-test words for $F_r$ in $m$ letters for any $r \geq 2$.

 In \cite{Lee} Lee constructed  for each $r, m \geq 2,$ a C-test word 
$w_r(z_1, ..., z_m)$  for $F_r$ with the additional property that $w_r(g_1, ..., g_m) = 1$ if and only if the subgroup of $F_r$ generated by $g_1, ..., g_m$ is cyclic.  We will refer to such words as {\em Lee words} for $F_r$.

\begin{theorem}
\label{th:vcisret}
Every verbally closed subgroup $H$ of a free group $F_r$ is a retract in $F_r.$
\end{theorem}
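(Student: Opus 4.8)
The plan is to build an explicit retraction $\varphi\colon F_r\to H$ by combining the rank bound of Proposition~\ref{pr:vcret}, Lee's $C$-test words, and one extra application of verbal closure to tame a conjugating element. First I would dispose of the small ranks. By Proposition~\ref{pr:vcret} the subgroup $H$ is free of some rank $m\le r$. If $m=0$ then $H=1$ is a retract, and if $m=1$ then $H$ is cyclic and Lemma~\ref{le:cycret} already gives that a verbally closed cyclic subgroup of $F_r$ is a retract. So I may assume $2\le m\le r$; I fix a basis $h_1,\dots,h_m$ of $H$ and write each $h_i=u_i(f_1,\dots,f_r)$ as a word in a basis $\{f_1,\dots,f_r\}$ of $F_r$.

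Next I set up the equation linking verbal closure to $C$-test words. Let $w=w_r(z_1,\dots,z_m)$ be a Lee word for $F_r$ and put $h=w(h_1,\dots,h_m)$; since $\gp{h_1,\dots,h_m}=H$ is non-cyclic, the defining property of Lee words gives $h\neq 1$. Substituting the $u_i$ into $w$ yields a single word $W(x_1,\dots,x_r):=w(u_1,\dots,u_m)$ in $F(X)$, and the verbal equation $W(x_1,\dots,x_r)=h$, with the constant $h\in H$, has the solution $x_j=f_j$ in $F_r$. Because $H$ is verbally closed, this equation has a solution $x_j=a_j\in H$. Setting $b_i:=u_i(a_1,\dots,a_r)\in H$, I obtain $w(b_1,\dots,b_m)=W(a_1,\dots,a_r)=h=w(h_1,\dots,h_m)\neq 1$, so the $C$-test property of $w$ furnishes an element $s\in F_r$ with $s^{-1}h_is=b_i$ for all $i$. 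The endomorphism $\psi\colon F_r\to F_r$ defined by $f_j\mapsto a_j$ then has image in $H$ and satisfies $\psi(h_i)=b_i=s^{-1}h_is$; that is, $\psi$ is the identity on $H$ up to conjugation by $s$.

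The main obstacle is that $s$ need not lie in $H$, so $\psi$ is not yet a retraction. I would resolve this as follows. Since conjugation is an automorphism, $s^{-1}hs=s^{-1}w(h_1,\dots,h_m)s=w(b_1,\dots,b_m)=h$, so $s$ centralizes $h$. In the free group $F_r$ the centralizer of $h\neq 1$ is an infinite cyclic group $\gp{t}$, where $t$ is the root of $h$, say $h=t^k$ with $k\ge 1$; hence $s\in\gp{t}$. To force $t$ (and therefore $s$) into $H$, I apply verbal closure once more, this time to the single-variable power word $z_1^k$: the equation $x^k=h$ has the solution $t$ in $F_r$, and since $x^k=t^k$ implies that $x$ commutes with $h$ and hence $x=t$, this is its \emph{unique} solution in $F_r$. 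Verbal closure then yields a solution in $H$, which must coincide with $t$, so $t\in H$ and consequently $s\in\gp{t}\le H$.

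Finally, since $s\in H$ and $\psi(F_r)\le H$, the conjugation map $c_s(x)=sxs^{-1}$ preserves $H$, so $\varphi:=c_s\circ\psi$ maps $F_r$ into $H$. It satisfies $\varphi(h_i)=s\,(s^{-1}h_is)\,s^{-1}=h_i$ for every $i$, hence restricts to the identity on $H$, and is therefore the desired retraction of $F_r$ onto $H$. The crux of the whole argument is the third paragraph: isolating the dependence of the $C$-test conjugator on $h$ and then using the power equation $x^k=h$ to pull the root of $h$ back into $H$.
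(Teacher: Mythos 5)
Your proposal is correct and follows essentially the same route as the paper's own proof: the rank bound from Proposition~\ref{pr:vcret}, the Lee word equation solved in $H$ by verbal closure, the $C$-test property producing a conjugator that centralizes $h=w(h_1,\dots,h_m)$, and a second application of verbal closure to a power equation together with uniqueness of roots in free groups to pull that conjugator into $H$. The only differences are cosmetic (you phrase the last step via the centralizer $\gp{t}$ and the root equation $x^k=h$, while the paper writes $u=f^s$, $h=f^t$ and solves $y^t=h$).
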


\begin{proof}
Let $H$ be a verbally closed subgroup of $F_r$. The case $r = 1$ is taken care of in Lemma \ref{le:cycret},  so we assume that $r \geq 2$.
By Proposition \ref{pr:vcret} $H$ is finitely generated with basis, say $h_1, ..., h_m$, where $m \leq r$.  For $m = 1$ the statement of the theorem   follows from Lemma \ref{le:cycret}. For the rest of proof we assume that $m \geq 2.$ 

Let $\{f_1, ..., f_r\}$ be a basis of $F_r.$  For $i = 1, ..., m$  fix a presentation $h_i = v_i(f_1, ..., f_r)$  of $h_i$ as a word in the generators.  To construct a retraction $F_r \to H$ we modify the argument in the proof of 2) in Proposition \ref{pr:1}. 

Let $w_m(z_1, ..., z_m)$ be a Lee  word for $F_r$ (for instance, constructed by Lee in \cite{Lee}).  An equation  

\begin{equation}
\label{eq:Lee}
w_m(v_1(x_1, ..., x_r), ..., v_m(x_1, ..., x_r)) = w_m(h_1, ..., h_m),
\end{equation} 
in variables $x_1, ..., x_n$ and constants $h_1, ..., h_m$ has a solution  $x_1 = f_1, \ldots, x_r = f_r$ in $F_r.$   Since $H$ is verbally closed there is a solution $x_i = g_i$ of  (\ref{eq:Lee}) with $g_i \in H$ for $i = 1, \ldots,r$, so 

\begin{equation}
\label{eq:Lee2}
w_m(v_1(g_1, ..., g_r), ..., v_m(g_1, ..., g_r)) = w_m(h_1, ..., h_m).
\end{equation} 

 Notice that the rank of $H = \langle h_1, ..., h_m \rangle$ is at least 2, so by Lee's theorem there is an element $u \in F_r$ such that 

\begin{equation}
\label{eq:con}
v_i(g_1, ..., g_r) = u^{-1}h_iu 
\end{equation}
for $ i = 1, ..., m$. 
Therefore
$$
w_m(u^{-1}h_1u, ..., u^{-1}h_mu) =u^{-1} w_m(h_1, ..., h_m)u = w_m(h_1, ..., h_m),
$$
so $u$ commutes with $h = w_m(h_1, ..., h_m)$. It follows that there is $f \in F_r$ such that $u = f^s, h = f^t$, for some $s, t \in \mathbb{Z}$. Since an equation $h = y^t$, where $y$ is a variable and $h \in H$ is a constant,  has a solution in $F_r$ it follows that it has a solution in $H$. But extraction of roots is unique in free groups, so $f \in H$ and $u = f^s \in H$.  Now, the  equality (\ref{eq:con}) implies that
$$
v_i(ug_1u^{-1}, ..., ug_ru^{-1}) = h_i,
$$
for all $i = 1, \ldots, m$.   This shows that  a homomorphism from $F_r$ to $H$ defined by $f_i \to ug_iu^{-1}, i = 1, \ldots,m$ is a  retraction of $F_r$ onto $H$. This proves the theorem.

\end{proof}

\medskip
\noindent
{\em Proof of Theorem 1.}  By Proposition \ref{pr:1} every retract in $F_r$ is algebraically closed in $F_r$, so a) $\Longrightarrow$ c). Implication c) $\Longrightarrow $ b) is obvious. Now  Theorem \ref{th:vcisret} implies b) $\Longrightarrow $ a). Hence all the conditions in Theorem  1 are equivalent, as claimed.

\section{Verbal closures of finitely generated subgroups of free groups}
\label{se:corollary}

Let $F_r$ be a free  group of finite rank $r$. In \cite{Berg} Bergman proved  that the intersection of two retracts in  $F_r$ is itself a retract.  
From this it is not hard to derive that the intersection of an arbitrary collection of retracts in $F_r$ is again a retract (see \cite[Lemma 18]{Turner} or \cite[Proposition 4.1]{MVW}). This together with Theorem \ref{th:vcisret} implies the following result.

\begin{proposition} \label{pr:intersection}
 The intersection of an arbitrary family of verbally (algebraically)  closed subgroups of $F_r$ is verbally closed. 
\end{proposition}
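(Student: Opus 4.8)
The plan is to reduce everything to the characterization of verbally closed subgroups as retracts, and then to invoke the already-established stability of retracts under intersection. First I would take an arbitrary family $\{H_i\}_{i \in I}$ of verbally closed subgroups of $F_r$ and apply Theorem \ref{th:vcisret} to each member, concluding that every $H_i$ is a retract of $F_r$. This is the key move: it converts a statement about verbal closures, for which intersections are not \emph{a priori} well-behaved, into a statement about retracts, for which a robust intersection theory is available.

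Next I would invoke Bergman's theorem that the intersection of two retracts of $F_r$ is again a retract, together with the cited derivation (from \cite[Lemma 18]{Turner} or \cite[Proposition 4.1]{MVW}) that this extends from the pairwise case to the intersection of an arbitrary collection of retracts. Applying this to the family $\{H_i\}$ yields that $H = \bigcap_{i \in I} H_i$ is itself a retract of $F_r$.

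Finally I would close the loop and return to verbal closure. By Proposition \ref{pr:1}, statement 1), a retract of $F_r$ is algebraically closed in $F_r$, and an algebraically closed subgroup is trivially verbally closed, since an equation $w(x_1,\ldots,x_n) = h$ is a single positive existential condition (this is the implication c) $\Longrightarrow$ b) of Theorem 1). Equivalently, one may simply cite Theorem 1, by which \emph{retract}, \emph{verbally closed}, and \emph{algebraically closed} are all equivalent conditions on a subgroup of $F_r$. Hence $H$ is verbally (and algebraically) closed, as required.

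I do not anticipate a genuine obstacle here, since the substantive work has already been carried out: the depth lies in Theorem \ref{th:vcisret} (whose proof rests on Lee words and the Allambergenov--Roman'kov commutator-width bounds) and in Bergman's intersection theorem. The one point deserving a word of care is that the passage from pairwise to arbitrary intersections of retracts is not automatic from a single application of Bergman's result; it is precisely the content of the cited lemma, so I would be sure to flag that this extension is exactly what licenses the \emph{arbitrary} family appearing in the statement.
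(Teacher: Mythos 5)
Your proposal is correct and follows exactly the paper's own argument: reduce to retracts via Theorem \ref{th:vcisret}, apply Bergman's intersection theorem as extended to arbitrary families in \cite{Turner} and \cite{MVW}, and return to verbal closure via Proposition \ref{pr:1}. No substantive difference from the paper's proof.
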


This proves Theorem 2. Theorem 3 follows from the propositions below.

\begin{proposition}\label{decide-retracts}
There is an algorithm to decide if  a given finitely generated subgroup of $F_r$ is verbally (algebraically) closed or not. 
\end{proposition}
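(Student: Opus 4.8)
The key insight is that by Theorem \ref{th:vcisret}, a finitely generated subgroup $H$ of $F_r$ is verbally (equivalently algebraically) closed if and only if $H$ is a retract of $F_r$. So the algorithmic problem reduces to deciding whether a given finitely generated subgroup is a retract, and the plan is to reduce this in turn to a finite search over candidate retractions. First I would put $H$ into a canonical form: given a finite generating set for $H$, use the Stallings foldings algorithm (or Nielsen reduction) to compute a free basis $h_1, \ldots, h_m$ of $H$ together with explicit words $v_i(f_1, \ldots, f_r)$ expressing each $h_i$ in a fixed basis $\{f_1, \ldots, f_r\}$ of $F_r$. This also computes the rank $m$, and by Proposition \ref{pr:vcret} if $m > r$ we may immediately answer ``no''.

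Next I would observe that $H$ is a retract if and only if there is a homomorphism $\varphi \colon F_r \to H$ fixing each $h_i$; such a $\varphi$ is determined by the images $\varphi(f_j) \in H$, which are themselves words in $h_1, \ldots, h_m$. The retraction condition $\varphi(h_i) = h_i$ for all $i$ is exactly the requirement that the tuple $(\varphi(f_1), \ldots, \varphi(f_r)) \in H^r$ be a solution of the system of equations
\begin{equation}
\label{eq:retrsystem}
v_i(x_1, \ldots, x_r) = h_i \qquad (i = 1, \ldots, m)
\end{equation}
with variables $x_1, \ldots, x_r$ and constants $h_1, \ldots, h_m \in H$. Thus $H$ is a retract precisely when the system \eqref{eq:retrsystem} has a solution with all $x_j \in H$. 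This is a system of equations with coefficients in the free group $F_r$ together with the rational (indeed, finitely generated subgroup) constraint $x_j \in H$ on every variable. By the result of Diekert, Gutierrez, and Hagenah \cite{Diekert} quoted in the introduction, the solvability of a system of equations in a free group with rational constraints on the variables is decidable. Running that algorithm on \eqref{eq:retrsystem} decides whether the required retraction exists, and hence whether $H$ is verbally (algebraically) closed.

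The main obstacle, and the place where some care is needed, is in justifying the reduction to a \emph{decidable} constraint problem rather than an unbounded search: the images $\varphi(f_j)$ range over the infinite set $H$, so one cannot naively enumerate homomorphisms. The rational-constraint machinery of \cite{Diekert} is exactly what removes this difficulty, since membership in the finitely generated subgroup $H \leq F_r$ is a rational constraint and can be encoded by a finite automaton (computed from the Stallings graph of $H$). An alternative route, avoiding the heavy equation-solving result, is to use the theory of algebraic extensions: every finitely generated $H \leq F_r$ has only finitely many algebraic extensions in $F_r$, all effectively computable \cite{KM,MVW}, and a retraction onto $H$ forces its image on any overgroup to factor through these; one could then search among the finitely many possibilities. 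Either way, the conceptual work has already been done by Theorem \ref{th:vcisret}, and what remains is to package the retract question as a decidable equation-with-constraints problem.
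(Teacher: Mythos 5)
Your reduction is correct and the first half of your argument coincides with the paper's: both pass through Theorem \ref{th:vcisret} to reduce the question to deciding whether $H$ is a retract, and both encode the existence of a retraction as solvability of the system $v_i(x_1,\ldots,x_r)=h_i$, $i=1,\ldots,m$, with the solution required to lie in $H$. Where you diverge is in the decidability tool. You keep the system over $F_r$ and impose $x_j\in H$ as a rational constraint on each variable, invoking the Diekert--Gutierrez--Hagenah result on equations with rational constraints. The paper instead observes that the whole problem lives inside $H$: the constants $h_i$ belong to $H$, the unknowns are sought in $H$, and $H$ is itself a free group of rank $m$ with basis $h_1,\ldots,h_m$, so the system is simply a system of equations over the free group $H$ and plain Makanin's algorithm already decides its solvability --- no constraint machinery needed. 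Your route is valid but uses a strictly heavier result; the paper's is more economical. Your alternative sketch via algebraic extensions (finitely many algebraic extensions of $H$, all computable) is in the right spirit --- it is essentially the folklore approach formalized in \cite{MVW} that the paper cites --- but as stated (``a retraction onto $H$ forces its image on any overgroup to factor through these'') it is too vague to count as a proof and would need the actual argument from \cite{MVW} to be filled in. One small omission: you should say why a free basis of $H$ and the words $v_i$ are computable from an arbitrary finite generating set (Nielsen reduction or Stallings foldings, as you note), which you do address; the early termination when $m>r$ via Proposition \ref{pr:vcret} is a harmless but unnecessary optimization.
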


\begin{proof}
 In the view of Theorem \ref{th:vcisret} it suffices to have an algorithm that decides if a given finitely generated subgroup $H$ of $F_r$ is a retract or not. Such an algorithm  has been known in folklore for some time. The formal description of an algorithm is given in \cite[Proposition 4.6]{MVW}.  For completeness we give a brief description of the algorithm here.
 
 Suppose that $F_r$ is a free group with basis $\{f_{1},\ldots,f_{r}\}$ and let
$h_{1},\ldots, h_{m}$ be a basis of $H$.  Suppose $h_i = v_i(f_1, \ldots,f_r)$ is a presentation of $h_i, i = 1, \ldots, m$, as a word in the  generators. Then $H$ is a retract of
$F_r$ if and only if there exist $x_{1},\ldots,x_{r}\in H$ such
that the endomorphism $\phi$ of $F_r$ defined by $\phi(f_{i}) =
x_{i}$ maps $H$ identically to itself.  That is, if
\begin{equation}\label{eq:retract-system}
h_i = v_{i}(x_{1},\ldots,x_{r})
\end{equation}

\noindent for $i = 1,\ldots,m$.  To decide if such $\phi$ exists or not it suffices to solve (\ref{eq:retract-system}), viewed as a system of equations in variables $x_1, \ldots, x_n$ and constants $h_1, \ldots, h_m$, in the free group $H$.  This is decidable by Makanin's algorithm
\cite{Mak}. This proves the result.
\end{proof}

\begin{proposition}\label{th:compute-closure}
There is an algorithm to  find a basis of $vcl(H)$   for a given finitely generated subgroup $H$ of $F_r$.
\end{proposition}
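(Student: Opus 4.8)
The plan is to reduce the computation of $vcl(H)$ to a finite, effectively searchable family. By Theorem \ref{th:vcisret} together with Proposition \ref{pr:intersection}, the subgroup $vcl(H)$ is precisely the smallest retract of $F_r$ containing $H$ (equivalently, the intersection of all retracts containing $H$, which is again a retract by \cite{Berg}). A priori there are infinitely many retracts containing $H$, so the key structural step is to confine the candidates for $vcl(H)$ to a finite list that can be generated algorithmically.

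The central observation is that $H \leq vcl(H)$ is an algebraic extension in the sense of \cite{KM,MVW}. Indeed, suppose not: then $H$ is contained in a proper free factor $R_0$ of $vcl(H)$, say $vcl(H) = R_0 \ast R_1$ with $R_1 \neq 1$. A free factor is a retract, so $R_0$ is a retract of $vcl(H)$; and $vcl(H)$ is a retract of $F_r$; hence by transitivity (Lemma \ref{le:1}) the subgroup $R_0$ is itself a retract of $F_r$. By Proposition \ref{pr:1} it is then verbally closed, and it contains $H$ while being properly contained in $vcl(H)$, contradicting minimality. Thus $vcl(H)$ must occur among the algebraic extensions of $H$.

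I would then run the following procedure. First, using the fact (see \cite{KM,MVW}) that a finitely generated subgroup of a free group has only finitely many algebraic extensions and that bases for all of them can be computed effectively, produce the complete finite list $E_1, \ldots, E_t$ of algebraic extensions of $H$. Second, apply the decision procedure of Proposition \ref{decide-retracts} to each $E_j$ and retain exactly those that are verbally closed (equivalently, retracts) in $F_r$. By the previous paragraph $vcl(H)$ is among the retained subgroups, and since $vcl(H)$ is contained in every verbally closed subgroup containing $H$, it is the unique $\subseteq$-minimal member of this retained subfamily. Third, identify that minimal member: inclusions between finitely generated subgroups of $F_r$ are decidable via Stallings graphs \cite{MVW}, so I can compare the retained $E_j$ pairwise and output the smallest. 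Equivalently, I may compute the intersection of all retained $E_j$; by \cite{Berg} this intersection is again a retract containing $H$, hence contains $vcl(H)$, while it is contained in $vcl(H)$ because the latter is one of the terms, so the two coincide, and intersections of finitely generated subgroups are computable. Either route yields a basis of $vcl(H)$.

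The main obstacle is the structural step rather than the bookkeeping: everything hinges on the claim that $vcl(H)$ is an algebraic extension of $H$, since this is exactly what converts the unbounded search over all retracts containing $H$ into a search over the finite, computable set of algebraic extensions. The remaining ingredients, namely finiteness and effective enumeration of algebraic extensions, the retract test of Proposition \ref{decide-retracts}, and decidability of inclusion and intersection of finitely generated subgroups, are known and need only be assembled.
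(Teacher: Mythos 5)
Your proposal is correct and takes the same route as the paper: reduce the problem to constructing the unique minimal retract of $F_r$ containing $H$, which the paper handles by citing \cite[Proposition 4.5]{MVW}. Your argument essentially reconstructs the proof of that cited result --- showing $H \leq vcl(H)$ is an algebraic extension and then searching the finite, computable list of algebraic extensions for retracts --- exactly the strategy the paper itself sketches in the introduction, so the only difference is that you supply details the paper delegates to a reference.
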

\begin{proof}
By Theorem \ref{th:vcisret} it suffices to construct the unique minimal retract in $F_r$ containing $H$. 
This is done in \cite[Proposition 4.5]{MVW}. 

\end{proof}

\section{Some open problems}
\label{se:problems}

\begin{problem}
What are verbally closed subgroup of a free nilpotent group of finite rank?
\end{problem}

\begin{problem}
Prove that  verbally closed subgroup of a torsion-free hyperbolic group are retracts.
\end{problem}

\medskip 
\noindent
{\em Acknowledgments.} The authors are grateful to V. Shpilrain for his helpful discussion.

\end{document}